\newtheorem{thm}{Theorem}[section]
\newtheorem*{thm*}{Theorem}
\newtheorem{lma}[thm]{Lemma}
\newtheorem*{qu*}{Question}
\newtheorem{prop}[thm]{Proposition}
\theoremstyle{definition}
\newtheorem{remark}[thm]{Remark}
\numberwithin{equation}{section}		
\numberwithin{figure}{section}		
\setlist[enumerate]{itemsep=1.5ex,topsep=1.5ex}
\setlist[enumerate,1]{label=(\roman*)}
\newcommand{\harmsp}{\mathcal{H}(\mathbb{R}^N)}
\newcommand{\harmpolysArgs}[2]{\mathcal{H}_{#1}(\mathbb{R}^{#2})}
\newcommand{\harmpolys}[1]{\harmpolysArgs{#1}{N}}
\newcommand{\Rplus}{\mathbb{R}_+}
\DeclarePairedDelimiter\floor{\lfloor}{\rfloor}
\author{Clifford Gilmore}
\address{Department of Mathematics and Statistics\\ P.O. Box 68\\ FI-00014 University of Helsinki\\ Finland.}
\email{clifford.gilmore@helsinki.fi}
\author{Eero Saksman}
\address{Department of Mathematics and Statistics\\ P.O. Box 68\\ FI-00014 University of Helsinki\\ Finland.}
\email{eero.saksman@helsinki.fi}
\author{Hans-Olav Tylli}
\address{Department of Mathematics and Statistics\\ P.O. Box 68\\ FI-00014 University of Helsinki\\ Finland.}
\email{hans-olav.tylli@helsinki.fi}
\thanks{The first and second authors have been supported by the Academy of Finland via the Centre of Excellence in Analysis and Dynamics Research (project no. 271983).  The first author has also been supported by the Doctoral Programme in Mathematics and Statistics of the University of Helsinki.}
\title[Growth of frequently hypercyclic harmonic functions]{Optimal growth of harmonic functions frequently hypercyclic for the partial differentiation operator}
\date{\today}
\keywords{Frequent hypercyclicity, partial differentiation operator, harmonic functions, growth rate}
\subjclass[2000]{Primary 47A16; Secondary 31B05}
\begin{document}

\begin{abstract}
We solve a problem posed by Blasco, Bonilla and Grosse-Erdmann in 2010
by constructing a harmonic function on $\mathbb{R}^N$, that is frequently hypercyclic with respect to the partial differentiation operator $\partial/\partial x_k$ and which has a minimal growth rate in terms of the average $L^2$-norm  on spheres of radius $r>0$ as $r \to \infty$.
\end{abstract}

\maketitle

\section{Introduction}

For a separable  Fr\'{e}chet space $X$, the continuous linear operator $T \colon X \to X$ 
is  \emph{hypercyclic}  if there exists  $x \in X$ (called a \emph{hypercyclic vector})
such that its orbit under $T$ is dense in $X$, that is
\begin{equation*}
\overline{\{T^n x : n \geq 0\}} = X.
\end{equation*}
A stronger property was introduced by Bayart and Grivaux in \cite{BG06}, where they defined $T \colon X \to X$ to be \emph{frequently hypercyclic} if there exists $x \in X$ such that for any nonempty open set $U \subset X$ one has
\begin{equation*}
\liminf_{m\to\infty} \frac{\# \{ n : T^n x \in U,\; 0 \leq n \leq m\}}{m}  > 0.
\end{equation*}
Here $\#$ denotes the cardinality of the set.  The definition states that the set of iterations, for which the orbit of $x$ visits any given neighbourhood of $X$, has positive lower density and
 such an $x \in X$ is  called a \emph{frequently hypercyclic vector} for $T$.
Comprehensive introductions to the area of linear dynamics can be found in \cite{BM09} and \cite{GEP11}.

It was shown in \cite[Example 2.4]{BG06} that the differentiation operator $f \mapsto f'$
is frequently hypercyclic on the  space of entire holomorphic functions on $\mathbb{C}$
and estimates for the growth  of its frequently hypercyclic vectors, in terms of average $L^p$-norms on spheres of radius $r>0$ as $r \to \infty$, were found by Blasco, Bonilla and 
Grosse-Erdmann~\cite[Theorems 2.3, 2.4]{BBGE10} and 
Bonet and Bonilla~\cite[Corollary 2.4]{BB13}.  
The minimal growth rates were subsequently established by Drasin and Saksman~\cite{DS12}.

 Aldred and Armitage~\cite{AA98a} previously identified sharp growth rates,  in terms of the average $L^2$-norm on spheres of radius $r>0$ as $r \to \infty$, of hypercyclic vectors for the partial differentiation operators  on the space $ \harmsp$ of  harmonic functions  on $\mathbb{R}^N$, 
which we denote by
\begin{equation*}
\partial/\partial x_k \colon \harmsp \to \harmsp
\end{equation*}
for $N \geq 2$ and $1 \leq k \leq N$.

Subsequently Blasco et al.~\cite[Theorem 4.2]{BBGE10} computed growth rates, again in terms of the $L^2$-norm  on spheres of radius $r>0$,  in the frequently hypercyclic case and they asked  about the minimal growth rates of frequently hypercyclic vectors for $\partial/\partial x_k$ on $\harmsp$.  In this paper we answer their question 
by explicitly constructing a frequently hypercyclic harmonic function with the prescribed growth rate.

\section{Frequent Hypercyclicity of the Partial Differentiation Operator} \label{sec:background}

In this section we recall the exact question posed by Blasco et al.~\cite{BBGE10}
and state our main result.
We first introduce the notions and background required to discuss the problem in precise
terms.

Denote by  $S(r)$  the sphere of radius $r$ in the euclidean metric $\vert \, \cdot \, \vert$
 centred at the origin of $\mathbb{R}^N$ and  let $\sigma_r$  be the normalised $(N-1)$-dimensional measure on $S(r)$, so that $\sigma_r(S(r)) = 1$.  
For $h \in \harmsp$ and $r > 0$ we let
\begin{equation}  \label{defn:M_2}
M_2 (h, r) = \left( \int_{S(r)} | h |^2 \; \mathrm{d}\sigma_r \right)^{1/2}
\end{equation}
denote the $2$-integral mean of $h$ on $S(r)$ and 
for $g, h \in \harmsp$ the corresponding inner product  is written as
\begin{equation*}
\langle g, h \rangle_r = \int_{S(r)} gh \; \mathrm{d}\sigma_r. 
\end{equation*}
The space $\harmsp$ of harmonic functions is a Fr\'{e}chet space when equipped 
 with the complete metric
\begin{equation*}
d(g,h) = \sum_{n=1}^\infty 2^{-n} \frac{\lvert g-h \rvert_{S(n)}}{1 + \lvert g-h \rvert_{S(n)}} 
\end{equation*}
for $g,h \in \harmsp$ and it corresponds to the topology of local uniform convergence.
 Above we set $\vert f\vert_{S(n)} = \sup_{\vert x\vert = n} \vert f(x)\vert$
for $f \in \harmsp$.

Aldred and Armitage~\cite[Theorem~1]{AA98a} proved that given any function 
$\varphi \colon \Rplus \to \Rplus$ with $\varphi(r) \to \infty$ as $r \to \infty$, 
there exists a harmonic function $h \in \harmsp$ which is a 
$\partial/\partial x_k$-hypercyclic vector,  for $1 \leq k \leq N$, such that 
\begin{equation*}
M_2(h,r) \leq \varphi(r) \frac{e^r}{r^{(N-1)/2}}
\end{equation*}
for $r > 0$ sufficiently large. 
Furthermore,  they showed  there does not exist a $\partial/\partial x_k$-hypercyclic vector $h \in \harmsp$ that satisfies
\begin{equation}\label{hccase}
M_2(h,r) \leq C \frac{e^r}{r^{(N-1)/2}}
\end{equation}
for $r > 0$ and any constant $C > 0$.
Strictly speaking, the results in \cite{AA98a} are stated for the more general concept of universality of the family $\{D^{\alpha}: \alpha \in \mathbb{N}^N\}$ of all partial derivatives and the preceding hypercyclicity growth results for $\partial/\partial x_k$ are implicit in the proofs.

Subsequently Blasco et al.~\cite[Section 4]{BBGE10} considered the frequently hypercyclic case, where  they obtained the following  $L^2$-growth rates  for  $1 \leq k \leq N$.
\begin{enumerate}[label=\arabic*.,leftmargin=1.5em, rightmargin=0.5em] 
\item Let $\varphi \colon \Rplus \to \Rplus$ be any function with $\varphi(r) \to \infty$ as $r \to \infty$.  Then there exists a $\partial/\partial x_k$-frequently hypercyclic function $h \in \harmsp$ with
\begin{equation*}
M_2(h,r) \leq \varphi(r) \frac{e^r}{r^{N/2 - 3/4}}
\end{equation*}
for $r > 0$ sufficiently large.

\item Let $\psi \colon \Rplus \to \Rplus$ be any function with $\psi(r) \to 0$ as $r \to \infty$.  Then there is no  $\partial/\partial x_k$-frequently hypercyclic vector $h \in \harmsp$ with
\begin{equation*}
M_2(h,r) \leq \psi(r) \frac{e^r}{r^{N/2 - 3/4}}
\end{equation*}
for $r > 0$ sufficiently large.
\end{enumerate}
Moreover, they  asked~\cite[Section 6]{BBGE10} whether there exists a  
$\partial/\partial x_k$-frequently hypercyclic vector $h \in \harmsp$ such that the above function 
$\varphi$  can be replaced with a  constant in the growth rate.

We answer this question in the positive in the following theorem, 
using a modification of the approach of
Drasin and Saksman \cite{DS12} in the case of the entire functions.
\begin{thm} \label{thm:growthRate}
Let $N \geq 2$ and $1 \leq k \leq N$.  Then for any constant $C > 0$ there exists 
a $\partial/\partial x_k$-frequently hypercyclic harmonic function $h \in \harmsp$ such that 
\begin{equation}\label{eq:minGro}
M_2(h,r) \leq C \frac{e^r}{r^{N/2 - 3/4}}
\end{equation}
for all $r > 0$. 
\end{thm}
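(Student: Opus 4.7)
The strategy adapts Drasin and Saksman's construction for entire functions~\cite{DS12} to the harmonic setting. We look for $h$ in the form of a doubly-indexed series
\begin{equation*}
h \;=\; \sum_{j \geq 1} \sum_{n \in A_j} \varepsilon_{j,n} \, T^n P_j ,
\end{equation*}
where $\{P_j\}$ is a countable dense sequence of harmonic polynomials in $\harmsp$, $T$ is a chosen right-inverse of $\partial/\partial x_k$ on harmonic polynomials that raises the homogeneous degree by one, the subsets $A_j \subset \mathbb{N}$ are pairwise disjoint and each has positive lower density, and the positive weights $\varepsilon_{j,n}$ are tuned so that the series converges in $\harmsp$. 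Granting that the construction goes through, frequent hypercyclicity follows by a Bayart--Grivaux style argument: for each $j$ and each $n \in A_j$ one has $(\partial/\partial x_k)^n h = \varepsilon_{j,n}^{-1}\cdot\varepsilon_{j,n} P_j + (\text{small tail})$, and $\underline{\mathrm{dens}}(A_j) > 0$ together with density of $\{P_j\}$ in $\harmsp$ forces the orbit of $h$ to visit every basic neighbourhood of $\harmsp$ on a set of iterates of positive lower density.

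The backbone of every growth estimate is the $L^2$-orthogonality of solid spherical harmonics on spheres: decomposing $h = \sum_m Y_m$ with $Y_m \in \harmpolysArgs{m}{N}$ yields the Parseval identity
\begin{equation*}
M_2(h,r)^2 \;=\; \sum_{m \geq 0} r^{2m} \lVert Y_m \rVert_{L^2(\sigma_1)}^2 .
\end{equation*}
On the operator side, the key quantitative input is a sharp $L^2(\sigma_1)$-contraction bound for $T$ on each block $\harmpolysArgs{m}{N}$ that beats the naive ``inverse-factorial'' gain coming from integration in one variable by an additional spherical-harmonic factor at each iteration. This improvement---implicit in the constructions of Aldred--Armitage~\cite{AA98a} and Blasco et al.~\cite{BBGE10}---is precisely what upgrades the hypercyclic exponent $(N-1)/2$ in~\eqref{hccase} to the frequently hypercyclic exponent $N/2 - 3/4$ in~\eqref{eq:minGro}.

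The main obstacle is establishing the sharp growth bound itself. One arranges the $P_j$ to lie in pairwise disjoint spherical harmonic blocks so that cross terms vanish in the Parseval identity, chooses the $A_j$ as sparse lacunary-type subsets of $\mathbb{N}$ whose starting indices are pushed out far enough that the family $\{T^n P_j : n \in A_j\}$ contributes only in high homogeneous degrees, and selects the weights $\varepsilon_{j,n}$ so that the degree-$m$ contribution to $M_2(h,r)^2$ is of size $O(r^{2m}/(m!)^2)$ with the right implicit constant. Summing on $m$ via a Bessel-type asymptotic, appropriately weighted by $\dim \harmpolysArgs{m}{N}$, then delivers the target bound $C^2 e^{2r}/r^{N-3/2}$. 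The delicate point---and the technical heart of the proof---is the simultaneous balancing of three competing constraints: the lower density of each $A_j$ (needed for frequent hypercyclicity), the smallness of the weights $\varepsilon_{j,n}$ (needed for the growth bound), and the size of the tail error in $(\partial/\partial x_k)^n h - \varepsilon_{j,n} P_j$ on compact subsets of $\mathbb{R}^N$ (needed for the approximation in $\harmsp$). This trade-off mirrors the construction in~\cite{DS12}, transplanted to the spherical harmonic $L^2$-geometry appropriate to $\harmsp$.
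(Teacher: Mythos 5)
Your proposal correctly identifies the overall strategy — build $h$ as a sum of primitives $T^n$ of a dense sequence of harmonic polynomials, taken over pairwise disjoint index sets of positive lower density, and control $M_2(h,r)$ via $L^2(\sigma_r)$-orthogonality of homogeneous components — and this is indeed the route the paper takes, adapting Drasin--Saksman. But the sketch has two genuine gaps that go to the heart of why \eqref{eq:minGro} holds with an \emph{arbitrary} constant $C$, rather than merely with a function $\varphi(r)\to\infty$ as already in \cite{BBGE10}.

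First, the weights $\varepsilon_{j,n}$ play no role as written: in your displayed identity $(\partial/\partial x_k)^n h = \varepsilon_{j,n}^{-1}\varepsilon_{j,n}P_j + (\text{tail})$ the weight cancels, and in fact the paper's construction uses no scalar weights at all. The decay that upgrades the hypercyclic exponent $(N-1)/2$ of \eqref{hccase} to $N/2-3/4$ comes entirely from the $L^2$-contraction estimate for the Aldred--Armitage primitive map \eqref{defn:Primitive}, namely the extra factor $(n+m+1)^{-(N/2-1)}$ in \eqref{ineq:GrowthEst} beyond the reciprocal factorial — not from any weight tuning. Also, the dense polynomials are not ``arranged to lie in disjoint spherical harmonic blocks'': they are arbitrary, and it is the \emph{blocks} $Q_n$ that are pushed into disjoint degree ranges by choosing primitive orders starting around $n^2$.

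Second — and this is the missing idea — you have not specified the combinatorial mechanism that simultaneously gives positive lower density and an arbitrarily small constant. In the paper, for $n = (2m-1)\ell_k 2^k \in \mathcal{A}_k$ the block $Q_n = \sum_{j=1}^{(2m-1)2^k} P_{n^2+j\ell_k}(F_k)$ uses primitives whose orders form an arithmetic progression of common difference $\ell_k$. This yields both (a) a set $\mathcal{B}_k$ of ``good'' derivative orders of lower density $\sim 1/(4\ell_k^2 2^k)>0$, and (b) a $1/\ell_k$ gain in the growth estimate through Lemma \ref{lma:growthRate}: summing $r^{2n}/\bigl(n!^2(n+1)^{N-2}\bigr)$ over an arithmetic progression of gap $\ell$ is $\lesssim \ell^{-1}e^{2r}/r^{N-3/2}$. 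Letting $\ell_k\to\infty$ fast enough, subject to \eqref{defn:l_k} and \eqref{defn:choice}, then makes $\sum_k c_{m_k} M_2^2(F_k,1)/\ell_k$ as small as desired, which is precisely what produces an arbitrary $C$. A ``lacunary'' choice of $A_j$ or an unspecified ``Bessel-type asymptotic'' does not by itself yield this sharp constant, so the proposal as stated would not close the gap between the $\varphi(r)$-result and Theorem \ref{thm:growthRate}.
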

Similar to   \cite{DS12}, the argument involves the explicit  construction of a  function in $\harmsp$ that is a frequently hypercyclic vector for $\partial/\partial x_k$. By contrast, \cite{BBGE10} applies 
a generalisation of the Frequent Hypercyclicity Criterion in an associated separable weighted Banach space of harmonic functions which is densely embedded in $(\harmsp, d)$, but this general technique does not appear to be available in the case of the minimal growth rate \eqref{eq:minGro}. 
It is also worthwhile to note the qualitative difference  between  \eqref{eq:minGro} 
and the corresponding behaviour \eqref{hccase} of the  $\partial/\partial x_k$-hypercyclic harmonic functions from \cite{AA98a}.

Furthermore, for $N=2$ the claim  can be deduced from the entire function case, as given in~\cite{DS12}, by considering the real part of a corresponding frequently hypercyclic entire function possessing minimal $L^2$-growth (cf.~the comments which appear at the beginning of the proof of Proposition \ref{prop:FHCh}).  
Thus we are essentially concerned with the case $N \geq 3$, which  turns out   to involve different tools compared to the case $N = 2$.
The required harmonic function is constructed in Section \ref{sec:construction} and we prove it is frequently hypercyclic for $\partial/\partial x_k$ in Section \ref{sec:FHCh}.  
The argument is completed in  Section \ref{sec:GrowthRate}
by showing it has the desired minimal growth rate \eqref{eq:minGro}.

\section{Harmonic Polynomials} \label{sec:harmPolys}

We recall in this section the crucial background and auxiliary results from \cite{AA98a}  and \cite{BBGE10} regarding harmonic polynomials on $\mathbb{R}^N$ needed to prove Theorem \ref{thm:growthRate}.
The space of homogeneous harmonic polynomials  on $\mathbb{R}^N$ of 
homogeneity degree $m \geq 0$ is denoted by $\harmpolys{m}$.
The  harmonic analogue of the standard power series representation of 
holomorphic functions states that any $h \in \harmsp$ has a unique expansion of the form
\begin{equation}  \label{eq:DecompHomoHarmPolys}
h = \sum_{m=0}^\infty H_m
\end{equation}
where $H_m \in \harmpolys{m}$ for each $m \ge 0$ and the expansion converges in the metric $d$, see  \cite[Corollary 5.34]{ABR01}.  Moreover, $\langle H_j, H_k \rangle_r = 0$ when $j \neq k$, so  by orthogonality one has for any $r>0$ that
\begin{equation*}
M_2^2(h,r) = \sum_{m=0}^\infty M_2^2 (H_m, r).
\end{equation*}
The references \cite{ABR01} and \cite{AG01} contain further useful background information on harmonic functions and the spaces  $\harmpolys{m}$ are discussed in detail in   \cite[Chapter 5]{ABR01} and \cite[Chapter 2]{AG01}.

It will be enough to prove Theorem  \ref{thm:growthRate}  in the case of  $\partial/\partial x_1$ and this will be our standing assumption in the sequel. The cases $\partial/\partial x_k$ for 
$k = 2, \dotsc, N$ can be dealt with analogously.

For any   $x = (x_1, \dotsc, x_N) \in \mathbb{R}^N$, we recall  a function $f \colon \mathbb{R}^N \to \mathbb{R}$ is said to be \emph{$x_1$-axial} if $f(x)$ depends only on $x_1$ and $\left( x_2^2 + \cdots + x_N^2 \right)^{1/2}$.
This means $f$ is invariant under rotation around the $x_1$-axis, that is
\begin{equation*}
f(x_1, x_2, \dotsc, x_N) = f(y_1, y_2, \dotsc, y_N)
\end{equation*}
whenever $x_1 = y_1$ and $x_2^2 + \cdots + x_N^2 = y_2^2 + \cdots + y_N^2$.

Kuran~\cite{Kur71} used  $x_1$-axial polynomials to construct a specific orthogonal representation of  $\harmpolys{m}$, see \eqref{eq:uniqRepH} below, which will be crucial for our construction. 
The starting point is the following fact due to Brelot and Choquet~\cite[Proposition 4]{BC55}
(see also  \cite[2.3.8]{AG01}), where $d_{m,N}$ denotes the dimension of  $\harmpolys{m}$:  
there exists an $x_1$-axial polynomial $I_{m,N}$ in $\harmpolys{m}$ satisfying
 $I_{m,N}(1,0,\dotsc,0) = 1$, for which every $x_1$-axial element of $\harmpolys{m}$ is proportional to $I_{m,N}$ and
\begin{equation*}
M_2(I_{m,N},r) = r^m \left( d_{m,N} \right)^{-1/2}
\end{equation*}
 for $r > 0$.
It can be shown~\cite[Proposition 5.8]{ABR01} that $d_{0,2} = 1$ and
\begin{equation*} \label{dim:dmN}
d_{m,N} = \frac{N+2m-2}{N+m-2}\binom{N+m-2}{m}
\end{equation*}
for $N+m \geq 3$.

 Kuran~\cite{Kur71} defined homogeneous (but not necessarily harmonic) polynomials of degree $m>0$ on $\mathbb{R}^N$ by   
\begin{equation*}
I^*_{m,N+2p}(x_1, \dotsc, x_N) = I_{m,N+2p}(x_1, \dotsc, x_N,\overbrace{0,\dotsc,0}^{2p})
\end{equation*}
for $p \in \mathbb{N}$.  We denote by $\mathcal{H}_m^0(\mathbb{R}^N)$  the subspace
\begin{equation*}
\mathcal{H}_m^0(\mathbb{R}^N)  = \left\lbrace H \in \harmpolys{m} : \partial H / \partial x_1 \equiv 0 \right\rbrace .
\end{equation*}
We will need the following reformulation of \cite[Theorems 2,3]{Kur71} which is recalled from \cite[Lemma 3]{AA98a}.
\begin{prop}
Let  $m \in \mathbb{N}$.
\begin{enumerate}
\item For $p \in \mathbb{N}$, if $u \in \mathcal{H}_p^0(\mathbb{R}^N)$ then $uI^*_{m,N+2p} \in \harmpolys{m+p}$ and
\begin{equation*}
d_{m,N+2p} M_2^2(uI^*_{m,N+2p}, 1) = M_2^2(u, 1).
\end{equation*}

\item If $H \in \harmpolys{m}$ then $H$ has a unique representation 
\begin{equation} \label{eq:uniqRepH}
H = \sum_{p=0}^m u_p I^*_{m-p,N+2p}
\end{equation}
where $u_p \in \mathcal{H}_p^0(\mathbb{R}^N)$ for $p = 0, \ldots , m$ and the terms in \eqref{eq:uniqRepH} are mutually orthogonal with respect to $\langle \: \cdot \, , \, \cdot \: \rangle_r$.
\end{enumerate}
\end{prop}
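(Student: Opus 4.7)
My plan is to treat the two parts separately, both via reductions using the axial structure of $I_{m,N+2p}$ and spherical-coordinate computations. For part (i), I would first note that $u\in\mathcal{H}_p^0(\mathbb{R}^N)$ is independent of $x_1$, hence a homogeneous harmonic polynomial of degree $p$ in the variables $x_2,\dotsc,x_N$. Writing the axial polynomial as $I_{m,N+2p}(y)=F(y_1,\rho)$ with $\rho=(y_2^2+\cdots+y_{N+2p}^2)^{1/2}$, its restriction reads $I^*_{m,N+2p}(x)=F(x_1,r)$ with $r=(x_2^2+\cdots+x_N^2)^{1/2}$. Since the radial part of the Laplacian depends on the ambient dimension, $\Delta_{N+2p}F=0$ yields $\Delta_N F=-(2p/r)F_r$; Euler's identity applied to $u$ together with $\partial_{x_j}F=(x_j/r)F_r$ for $j\ge 2$ gives $2\nabla u\cdot\nabla F=(2p/r)uF_r$; and $\Delta_N u=0$. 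The three contributions to $\Delta_N(uF)$ cancel, so $uI^*_{m,N+2p}\in\harmpolys{m+p}$.

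For the $L^2$-identity I would parametrise $S^{N-1}$ by $x=(x_1,\sqrt{1-x_1^2}\,\omega)$ with $\omega\in S^{N-2}$: because $I^*_{m,N+2p}$ depends only on $x_1$ and $|u(\sqrt{1-x_1^2}\,\omega)|^2=(1-x_1^2)^p|u(\omega)|^2$ by homogeneity, $M_2^2(uI^*_{m,N+2p},1)$ factorises into an $x_1$-integral carrying the weight $(1-x_1^2)^{(N+2p-3)/2}$ and the $S^{N-2}$-integral of $|u(\omega)|^2$. The analogous parametrisation of $S^{N+2p-1}$ used to evaluate $M_2^2(I_{m,N+2p},1)=1/d_{m,N+2p}$ produces exactly the same $x_1$-integral, and matching the two expressions yields the claimed factor $d_{m,N+2p}$.

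For part (ii), I would combine dimension counting with the orthogonality. The map $(u_0,\dotsc,u_m)\mapsto\sum_{p=0}^m u_p I^*_{m-p,N+2p}$ lands in $\harmpolys{m}$ by part (i); since $\dim\mathcal{H}_p^0(\mathbb{R}^N)=d_{p,N-1}$ and the classical combinatorial identity $\sum_{p=0}^m d_{p,N-1}=d_{m,N}$ matches the target dimension, it suffices to verify injectivity, which reduces to the mutual orthogonality. For $p\ne q$, the same spherical parametrisation applied to $\langle u_p I^*_{m-p,N+2p},u_q I^*_{m-q,N+2q}\rangle_r$ factorises the integral as an $x_1$-integral times $\int_{S^{N-2}}u_p u_q\,d\sigma$, and this inner integral vanishes by the standard orthogonality of harmonic polynomials on $\mathbb{R}^{N-1}$ of distinct homogeneity degrees.

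The step I expect to require the most care is the angular-integral bookkeeping in part (i), where the $(1-x_1^2)^{(N-3)/2}$ weight from the $S^{N-1}$ measure must combine with the homogeneity factor $(1-x_1^2)^p$ to produce exactly the $(1-x_1^2)^{(N+2p-3)/2}$ weight arising naturally on $S^{N+2p-1}$. Once that alignment is transparent, the constant $d_{m,N+2p}$ comes out of the prior identity $M_2^2(I_{m,N+2p},1)=1/d_{m,N+2p}$, and the orthogonality in (ii) is an immediate consequence of harmonic-polynomial orthogonality on $S^{N-2}$.
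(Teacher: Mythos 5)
The paper does not prove this proposition; it is recalled from \cite[Lemma 3]{AA98a} as a reformulation of \cite[Theorems 2, 3]{Kur71}, so there is no in-paper argument to compare your sketch against. Your reconstruction of the underlying argument is correct and is essentially the standard (Kuran) route. The Laplacian cancellation in part (i) checks out: writing $I^*_{m,N+2p}=F(x_1,r)$ with $r=(x_2^2+\cdots+x_N^2)^{1/2}$, the dimension shift gives $\Delta_N F = -\tfrac{2p}{r}F_r$, while $x_1$-independence of $u$ and Euler's identity give $2\nabla u\cdot\nabla F = \tfrac{2p}{r}uF_r$, and together with $\Delta_N u=0$ these cancel. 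The spherical factorisation through $x=(x_1,\sqrt{1-x_1^2}\,\omega)$ correctly merges the $(1-x_1^2)^{(N-3)/2}$ weight of $S^{N-1}$ with the $(1-x_1^2)^p$ homogeneity factor from $u$ to produce the $S^{N+2p-1}$ weight $(1-x_1^2)^{(N+2p-3)/2}$; the constant $d_{m,N+2p}$ then drops out of $d_{m,N+2p}M_2^2(I_{m,N+2p},1)=1$ once one also checks that the normalisation ratios $|S^{N-2}|/|S^{N-1}|$ and $|S^{N+2p-2}|/|S^{N+2p-1}|$ match up with the same $x_1$-integral on both sides --- that bookkeeping is worth spelling out in a full write-up, though it does work. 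For part (ii), identifying $\mathcal{H}_p^0(\mathbb{R}^N)$ with $\harmpolysArgs{p}{N-1}$, invoking the dimension identity $\sum_{p=0}^m d_{p,N-1}=d_{m,N}$, and deriving mutual orthogonality from the $S^{N-2}$-orthogonality of spherical harmonics of distinct degree gives injectivity (via part (i)) and hence bijectivity, which is exactly the existence and uniqueness claim.
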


The preceding result allowed 
Aldred and Armitage~\cite{AA98a} to define linear maps 
$P_k \colon \harmpolys{m} \to \harmpolys{m+k}$,  for $k \geq 0$, by
\begin{equation} \label{defn:Primitive}
P_k(H) = \sum_{p=0}^m \frac{(m-p)!}{(m-p+k)!} u_p I^*_{m-p+k,N+2p}
\end{equation}
where $H \in \harmpolys{m}$ has the representation \eqref{eq:uniqRepH}.
In view of the following fundamental lemma, taken from \cite[Lemma 4]{AA98a}, we will refer to $P_k(H)$ as the $k^\mathrm{th}$ \emph{primitive} of $H$.
\begin{lma} \label{lma:PrimitiveProps}
Let $m, k \geq 0$ and $N \geq 2$. If $H \in \harmpolys{m}$  then $P_k(H) \in \harmpolys{m+k}$,
\begin{equation*}
\frac{\partial^k}{\partial x_1^k} P_k(H) = H
\end{equation*}
and
\begin{equation}  \label{ineq:primUpperBnd}
M_2^2 (P_k(H), 1) \leq c_{k,m,N} M_2^2 (H,1)
\end{equation}
where 
\begin{equation*}
c_{k,m,N} = \frac{(N+2m-2)!}{k!(N+2m+k-3)!(N+2m+2k-2)}.
\end{equation*}
\end{lma}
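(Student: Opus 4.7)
I would proceed term-by-term using the Kuran decomposition \eqref{eq:uniqRepH} of $H = \sum_{p=0}^{m} u_p I^*_{m-p, N+2p}$ with $u_p \in \mathcal{H}_p^0(\mathbb{R}^N)$. The harmonicity claim $P_k(H) \in \harmpolys{m+k}$ then follows immediately: each summand of $P_k(H) = \sum_p \frac{(m-p)!}{(m-p+k)!} u_p I^*_{m-p+k, N+2p}$ has the shape $u_p I^*_{m-p+k, N+2p}$ with $u_p \in \mathcal{H}_p^0$, and part (i) of the preceding proposition places it in $\harmpolys{(m-p+k)+p} = \harmpolys{m+k}$.

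For the identity $\partial^k P_k(H)/\partial x_1^k = H$, since $\partial u_p/\partial x_1 \equiv 0$ by the definition of $\mathcal{H}_p^0$, only the factor $I^*_{m-p+k,N+2p}$ is differentiated. Thus everything reduces to showing
$$\frac{\partial^k}{\partial x_1^k} I^*_{n, N+2p} = \frac{n!}{(n-k)!}\, I^*_{n-k, N+2p}.$$
I would obtain this by observing that $\partial^k I_{n,N+2p}/\partial x_1^k$ is an $x_1$-axial, homogeneous, harmonic polynomial of degree $n-k$ on $\mathbb{R}^{N+2p}$; by the Brelot--Choquet uniqueness recalled in the text it must be proportional to $I_{n-k,N+2p}$, and the constant is fixed by writing $I_{n,N+2p}$ as a polynomial in $x_1$ and $|x_\perp|^2 = x_2^2 + \cdots + x_{N+2p}^2$ (of the form $x_1^n + |x_\perp|^2(\text{lower})$, forced by $I_{n,N+2p}(1,0,\dotsc,0) = 1$) and evaluating at $(1, 0, \dotsc, 0)$ after $k$ differentiations. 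Restricting to $\mathbb{R}^N$ yields the analogous identity for $I^*$, after which the factorial weights in $P_k$ telescope precisely to recover $H$.

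For the $L^2$ bound \eqref{ineq:primUpperBnd}, part (ii) of the proposition guarantees the representation \eqref{eq:uniqRepH} of $P_k(H)$ is orthogonal with respect to $\langle\,\cdot\,,\,\cdot\,\rangle_1$, and part (i) gives $M_2^2(u_p I^*_{n,N+2p}, 1) = d_{n,N+2p}^{-1} M_2^2(u_p, 1)$. Consequently
$$M_2^2(P_k(H),1) = \sum_{p=0}^m \left(\frac{(m-p)!}{(m-p+k)!}\right)^{\!2} \frac{M_2^2(u_p,1)}{d_{m-p+k, N+2p}}, \qquad M_2^2(H,1) = \sum_{p=0}^m \frac{M_2^2(u_p,1)}{d_{m-p, N+2p}},$$
so it is enough to verify the uniform estimate
$$\left(\frac{(m-p)!}{(m-p+k)!}\right)^{\!2} \frac{d_{m-p, N+2p}}{d_{m-p+k, N+2p}} \leq c_{k, m, N} \qquad \text{for all } 0 \leq p \leq m.$$

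The main obstacle is precisely this last inequality: substituting the explicit formula for $d_{j,N+2p}$ turns the left-hand side into a ratio of binomial coefficients together with a rational factor in $N, m, k, p$, and one has to confirm monotonicity in $p$ so that the maximum is attained at $p=0$, where the value matches $c_{k,m,N}$ by design. Once this combinatorial bookkeeping is carried out, the proof is complete; everything else is formal manipulation driven by the Kuran decomposition and the derivative identity for $I^*_{n,N+2p}$.
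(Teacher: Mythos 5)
The paper does not prove this lemma; it is cited verbatim from Aldred--Armitage \cite[Lemma 4]{AA98a}, so there is no in-paper argument to compare against. Judged on its own merits, your outline has the right skeleton, but it contains a concrete error in the last (and key) step.

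The first two claims are handled correctly: harmonicity of $P_k(H)$ comes from part (i) of the preceding proposition applied termwise, and the differentiation formula reduces, exactly as you say, to
\begin{equation*}
\frac{\partial^k}{\partial x_1^k}\, I^*_{n,N+2p} \;=\; \frac{n!}{(n-k)!}\, I^*_{n-k,N+2p},
\end{equation*}
which follows from the Brelot--Choquet uniqueness of axial harmonics plus normalisation at $(1,0,\dotsc,0)$; the weights $\frac{(m-p)!}{(m-p+k)!}$ in the definition of $P_k$ then cancel the factorial factor. Your reduction of \eqref{ineq:primUpperBnd} to the termwise inequality
\begin{equation*}
\left(\frac{(m-p)!}{(m-p+k)!}\right)^{2}\,\frac{d_{m-p,\,N+2p}}{d_{m-p+k,\,N+2p}} \;\leq\; c_{k,m,N}, \qquad 0\leq p\leq m,
\end{equation*}
via the mutual orthogonality of the Kuran pieces is likewise correct.

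The gap is in how you dispose of this inequality: you assert that the left-hand side is monotone in $p$ with maximum at $p=0$, ``where the value matches $c_{k,m,N}$ by design.'' That is false. Already for $N=3$, $m=k=1$ the $p=0$ term equals $3/20$ while $c_{1,1,3}=1/5=4/20$; the two disagree and the $p=0$ term is not the extremal one. In fact the maximum is attained at the other endpoint $p=m$. Writing $q=m-p$ and substituting the dimension formula, the ratio simplifies to
\begin{equation*}
g(q) \;=\; \frac{q!}{(q+k)!}\cdot\frac{(N+2m-2)\,(N+2m-q-3)!}{(N+2m+2k-2)\,(N+2m+k-q-3)!},
\end{equation*}
and one checks that $g(q+1)/g(q) = \frac{(q+1)(N+2m+k-q-3)}{(q+k+1)(N+2m-q-3)} \leq 1$ for all $0\leq q \leq m-1$ and $N\geq 2$, so $g$ is non-increasing on $\{0,\dotsc,m\}$ with $g(0)=c_{k,m,N}$. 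Thus the extremal case is $p=m$ (constant $I^{*}$ factor, $u_m\in\mathcal{H}^0_m$), not $p=0$, and this verification is the actual content of \eqref{ineq:primUpperBnd}; leaving it as ``bookkeeping'' with the wrong endpoint identified means the argument as written does not close.
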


For fixed $m$ we will use the simpler estimate 
\begin{equation}
 \left( c_{k,m,N} \right)^{1/2} \leq \frac{c_m}{(k + m)!(k + m +1)^{N/2 -1}} \label{ineq:GrowthEst}
\end{equation}
for $k \in \mathbb{N}$, (cf.~line (4.2) in  \cite[p.~52]{BBGE10}).  Here
\begin{equation}  \label{defn:c_m}
c_m = c(m,N)
\end{equation} 
and the exact bound in \eqref{defn:c_m} is not important for our purposes, but we may assume that $m \mapsto c_m$ is increasing. 

Finally, the following  compatibility property of the different maps $P_k$ 
defined by \eqref{defn:Primitive} will be technically convenient.

\begin{lma}\label{lm:compat}
Let $H \in \harmpolys{m}$ and $k, \ell \geq 0$.  Then
\begin{equation*}
P_{k+\ell}(H) = P_k \left( P_\ell(H) \right).
\end{equation*}
\end{lma}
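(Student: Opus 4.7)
The plan is to prove the compatibility by unwinding the explicit definition \eqref{defn:Primitive} and invoking uniqueness of the representation \eqref{eq:uniqRepH}. The key observation is that the axial decomposition of $H \in \harmpolys{m}$ transports in a transparent way under $P_\ell$: the $\mathcal{H}_p^0(\mathbb{R}^N)$-components $u_p$ are preserved (up to scalar), only the factors $I^*_{m-p,N+2p}$ and the rational coefficients change.

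First I would fix $H \in \harmpolys{m}$ with its unique decomposition $H = \sum_{p=0}^{m} u_p\, I^*_{m-p,N+2p}$ where $u_p \in \mathcal{H}_p^0(\mathbb{R}^N)$, so that by definition
\begin{equation*}
P_\ell(H) = \sum_{p=0}^{m} \frac{(m-p)!}{(m-p+\ell)!}\, u_p\, I^*_{m-p+\ell,\,N+2p}.
\end{equation*}
Now $P_\ell(H)$ lies in $\harmpolys{m+\ell}$, so by the proposition it has a unique representation $P_\ell(H) = \sum_{q=0}^{m+\ell} v_q\, I^*_{m+\ell-q,\,N+2q}$ with $v_q \in \mathcal{H}_q^0(\mathbb{R}^N)$. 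Reindexing the displayed formula by $q = p$ and comparing, the uniqueness clause forces
\begin{equation*}
v_p = \frac{(m-p)!}{(m-p+\ell)!}\, u_p \quad \text{for } 0 \le p \le m, \qquad v_q = 0 \quad \text{for } m < q \le m+\ell.
\end{equation*}

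Next I would apply $P_k$ to $P_\ell(H)$ using this identified decomposition. By \eqref{defn:Primitive} with base degree $m+\ell$,
\begin{equation*}
P_k(P_\ell(H)) = \sum_{p=0}^{m+\ell} \frac{(m+\ell-p)!}{(m+\ell-p+k)!}\, v_p\, I^*_{m+\ell-p+k,\,N+2p},
\end{equation*}
and the $v_p = 0$ terms for $p > m$ drop out. Substituting the value of $v_p$ for $p \le m$ collapses the product of factorials telescopically:
\begin{equation*}
\frac{(m+\ell-p)!}{(m+\ell-p+k)!}\cdot\frac{(m-p)!}{(m+\ell-p)!} = \frac{(m-p)!}{(m-p+k+\ell)!}.
\end{equation*}
This yields exactly
\begin{equation*}
P_k(P_\ell(H)) = \sum_{p=0}^{m} \frac{(m-p)!}{(m-p+k+\ell)!}\, u_p\, I^*_{m-p+k+\ell,\,N+2p} = P_{k+\ell}(H),
\end{equation*}
completing the proof.

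There is no real obstacle here; the only point requiring care is that the expansion of $P_\ell(H)$ as written is already in the canonical form prescribed by the proposition (the $u_p$ are unchanged as elements of $\mathcal{H}_p^0(\mathbb{R}^N)$, and the axial factors $I^*_{m-p+\ell,\,N+2p}$ sit in the correct slots for the ambient homogeneity degree $m+\ell$), so uniqueness applies directly and no further identification of components is needed.
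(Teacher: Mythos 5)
Your proof is correct and follows essentially the same route as the paper's: expand $P_\ell(H)$ via \eqref{defn:Primitive}, observe that this expansion is already the canonical axial decomposition of an element of $\harmpolys{m+\ell}$ (with $v_p = \frac{(m-p)!}{(m-p+\ell)!}u_p$ for $p \le m$ and $v_q=0$ otherwise), apply $P_k$ to it, and telescope the factorials using $(m+\ell-p)! = (m-p+\ell)!$. The paper's one-line computation suppresses the explicit identification of the $v_q$ and the appeal to uniqueness, but that is exactly the step you spell out, so the two arguments are the same.
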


\begin{proof}
$P_\ell(H) \in \harmpolys{m+\ell}$ and hence
\begin{align*}
P_k \left( P_\ell(H) \right) &= P_k \left( \sum_{p=0}^m \frac{(m-p)!}{(m-p+\ell)!} u_p I^*_{m-p+\ell,N+2p} \right) \\
&= \sum_{p=0}^m \frac{(m-p)!}{(m-p+k+\ell)!} u_p I^*_{m-p+k+\ell,N+2p} = P_{k+\ell}(H).
\end{align*}
\end{proof}

\section{Construction of the Harmonic Function $h$}  \label{sec:construction}

Let $N \geq 2$  be fixed. The set of harmonic polynomials on $\mathbb{R}^N$ is 
dense in  the separable Fr\'{e}chet space $(\harmsp, d)$, so we can fix a $d$-dense sequence of harmonic polynomials $ \left( F_k \right) \subset \harmsp$. 
For technical simplicity we will also assume that each polynomial is repeated infinitely often
in the sequence.
For each $k \geq 0$  we let $m_k$ be the degree of $F_k$ and by
 \eqref{eq:DecompHomoHarmPolys} there is a unique representation
\begin{equation} \label{decompF_k}
F_k = \sum_{j=0}^{m_k} H_{k,j}
\end{equation}
where $H_{k,j} \in \harmpolys{j}$  for $j = 0, \dotsc, m_k$.   
 We associate with each $F_k$ an odd integer $\ell_k \in \mathbb{N}$, 
 so that the sequence $(\ell_k)$ is strictly increasing and
 \begin{equation} \label{defn:l_k}
 \ell_k \geq   2m_k + c_{m_k} 2^k \left( M_2^2 \left( F_k, 1 \right) + 1 \right)
 \end{equation}
 where $c_{m_k}$ is as defined in \eqref{defn:c_m}.
The final choice for $(\ell_k)$ will  be made later in \eqref{defn:choice}, depending on a 
given constant $C > 0$.

We can  unambiguously define the $n^\mathrm{th}$ primitive of $F_k$  for all $n \in \mathbb{N}$ as
\begin{equation*}
P_n(F_k) = \sum_{j=0}^{m_k} P_n(H_{k,j})
\end{equation*}
where each $P_n(H_{k,j}) \in \harmpolys{n+j}$ is as defined in \eqref{defn:Primitive}. It follows 
from Lemma \ref{lma:PrimitiveProps}  that
\begin{equation*}
\frac{\partial^n}{\partial x_1^n} P_n(F_k) = F_k.
\end{equation*}
We next introduce the sets 
\begin{equation}\label{def:arith}
\mathcal{A}_k = \left\lbrace (2m-1) \ell_k 2^k : m \geq 1 \right\rbrace
\end{equation}
for each $k \geq 1$. Note that the $\mathcal{A}_k$ are pairwise disjoint 
infinite arithmetic sequences, so that
$ \bigcup_{k\geq 1} \mathcal{A}_k$ is a partition of some subset of the even natural numbers.

To construct the required harmonic function, we define for each $n \in  \mathcal{A}_k$
a harmonic polynomial $Q_n$ which is a finite sum of suitable primitives of  the  
harmonic polynomial $F_k$. 

We first let   $Q_n = 0$
whenever $n \notin  \bigcup_{k\geq 1} \mathcal{A}_k$ 
(this includes all odd integers $n$) or $n = 0$.
Suppose next that the even integer $n \in \mathcal{A}_k$, for a fixed unique $k$.  
If  $n < 10\ell_k$, we set $Q_n = 0$ and for $n = (2m-1)\ell_k 2^k \geq 10\ell_k$  we define
\begin{equation*}
Q_n = \sum_{j = 1}^{(2m-1)2^k} P_{n^2 + j\ell_k} (F_k). 
\end{equation*}
Finally, we define 
\begin{equation} \label{defn:h}
h = \sum_{n=1}^\infty  Q_n = \sum_{k=1}^\infty \sum_{n \in \mathcal{A}_k} Q_n
\end{equation}
and we proceed to show in Sections \ref{sec:FHCh} and \ref{sec:GrowthRate} 
that $h$ satisfies the claims of Theorem \ref{thm:growthRate}. 
Note  we must also verify that $h$ is defined on the whole of 
$\mathbb{R}^N$ and that $h \in \harmsp$.  Since the details are closely related to the estimates in Section \ref{sec:FHCh}
 we defer this discussion until Remark \ref{rm:harmonic}.

The estimates in Sections \ref{sec:FHCh} and \ref{sec:GrowthRate} 
 will frequently use the fact that the respective sets of homogeneity degrees of 
the harmonic polynomials appearing in $Q_n$ and $Q_{n'}$ are disjoint whenever 
$n \neq n'$.  That is  $\langle \, Q_n, \, Q_{n'} \, \rangle_r = 0$ for any $r > 0$, so that 
 $Q_n$ and $Q_{n'}$ are orthogonal for $M_2(\: \cdot\: , \: r)$. In fact, if 
 $F_k = \sum_{q=0}^{m_k} H_{k,q}$ and $n = (2m-1)\ell_k 2^k \in \mathcal{A}_k$, then the homogeneity degrees related to  $Q_n =  \sum_{j = 1}^{(2m-1)2^k} P_{n^2 + j\ell_k} (F_k)$ are contained in the intervals $[n^2,  n^2 + (2m-1)\ell_k 2^k + m_k]$. These intervals are pairwise 
 disjoint for different $n$, since
\begin{equation*}
 n^2 + (2m-1)\ell_k 2^k + m_k = n^2 + n + m_k < (n+1)^2
\end{equation*}
 in view of  \eqref{defn:l_k}.

\section{Frequent Hypercyclicity of $h$}  \label{sec:FHCh}

The aim of this section is to prove  in Proposition \ref{prop:FHCh} that the function $h$ defined in \eqref{defn:h} is a frequently hypercyclic vector in $\harmsp$ for the partial differentiation operator
$\partial /\partial x_1$.  Towards this end, we first point out that 
convergence in the average $L^2$-norm on the  
sphere $S(2r)$ of radius $2r$ gives convergence in the sup-norm on $S(r)$
for any $r>0$. This depends on basic facts about the Poisson kernel, which we first recall (the details can be found in \cite[Chapter 1]{ABR01} or \cite[Section 1.3]{AG01}). 

We  denote the open ball of radius $r>0$ centred at the origin of $\mathbb{R}^N$ by $B(r)$ and  
put $B = B(1)$.  Moreover, let   
$\overline{B}(r)$ be the closed  ball and $S = S(1)$ the unit sphere.  
The \emph{Poisson kernel}  of $B$ is the function $P \colon B \times S \to \mathbb{R}$ defined as 
\begin{equation*}
P(x,y) = \frac{1 - \lvert x \rvert^2}{\lvert x - y \rvert^N}
\end{equation*}
for $x \in  B$ and $y \in S$. 
It  is well known  for  any harmonic function $h \in \mathcal{H}(\overline{B})$ that we have 
\begin{equation}  \label{poissonRep}
 h(x)  = \int_S  P(x,y) h(y)  \; \mathrm{d}\sigma(y)
\end{equation}
for all $x \in B$ and where  $\sigma = \sigma_1$ is the normalised $(N-1)$-dimensional measure on $S$, as introduced in Section \ref{sec:background}.

\begin{lma}  \label{lma:convergenceLUC}
Let $h$ be a harmonic function on $\overline{B}(2r) \subset \mathbb{R}^N$, for $r>0$ 
and $N \geq 2$.  Then 
\begin{align*}
\sup_{\lvert x \rvert = r} \lvert h(x) \rvert \leq c_N M_2(h, 2r)
\end{align*}
where $c_N$ is a constant depending only on $N$.
\end{lma}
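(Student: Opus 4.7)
The strategy is to reduce to the unit ball case by rescaling and then apply the Poisson integral representation \eqref{poissonRep} together with a crude uniform bound on the Poisson kernel.

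First I would set $\tilde{h}(y) = h(2r y)$ for $y \in \overline{B}$, which is harmonic on $\overline{B}(1)$ since harmonicity is preserved under dilation. The bound we want on $\sup_{|x|=r}|h(x)|$ translates, via the substitution $x = 2r \xi$, into
\begin{equation*}
\sup_{|\xi| = 1/2} |\tilde{h}(\xi)| \leq c_N M_2(\tilde{h}, 1),
\end{equation*}
since the normalised measure on $S(r)$ pushes forward to the normalised measure $\sigma$ on $S$ under $y \mapsto 2r y$, so that $M_2(\tilde h, 1) = M_2(h, 2r)$.

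Next, for any $\xi \in \overline{B}(1/2)$ I would apply the Poisson representation \eqref{poissonRep} to obtain
\begin{equation*}
\tilde{h}(\xi) = \int_S P(\xi, y) \tilde{h}(y) \; \mathrm{d}\sigma(y).
\end{equation*}
The point is that for $|\xi| \leq 1/2$ and $|y| = 1$ one has the elementary bounds $1 - |\xi|^2 \leq 1$ and $|\xi - y| \geq 1/2$, so $P(\xi, y) \leq 2^N$ uniformly. Hence Cauchy--Schwarz, combined with $\sigma(S) = 1$, gives
\begin{equation*}
|\tilde{h}(\xi)| \leq 2^N \int_S |\tilde{h}(y)| \; \mathrm{d}\sigma(y) \leq 2^N \left( \int_S |\tilde{h}(y)|^2 \; \mathrm{d}\sigma(y) \right)^{1/2} = 2^N M_2(\tilde{h}, 1).
\end{equation*}
Taking the supremum over $|\xi| = 1/2$ and translating back to $h$ via the scaling yields the lemma with $c_N = 2^N$.

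There is no real obstacle here: the only thing to take care of is to verify that the normalised surface measure behaves correctly under the dilation $y \mapsto 2r y$, so that the $M_2$-norm simply rescales, and to note that one could freely replace the factor $2$ by any constant $> 1$ at the cost of adjusting $c_N$.
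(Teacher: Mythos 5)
Your proof is correct and follows essentially the same route as the paper: rescale to the unit ball, invoke the Poisson representation \eqref{poissonRep} on $\overline{B}(1/2)$, and finish with Cauchy--Schwarz. The only cosmetic difference is that you bound the Poisson kernel pointwise by $2^N$ before applying Cauchy--Schwarz (giving an explicit constant), whereas the paper absorbs the kernel into the $L^2$ factor $\bigl(\int_S P(x,y)^2\,\mathrm{d}\sigma(y)\bigr)^{1/2}$; both yield a constant depending only on $N$.
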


\begin{proof}
For any $x \in B$ with $\lvert x \rvert \leq 1/2$, it follows from \eqref{poissonRep} and the Cauchy-Schwarz inequality that
\begin{align}
\lvert h(x) \rvert &\leq \int_S P(x,y) \lvert h(y) \rvert \; \mathrm{d}\sigma(y) \nonumber \\
&\leq \left( \int_S \left( P(x,y) \right)^2 \, \mathrm{d}\sigma(y) \right)^{1/2} M_2(h,1) = c_N M_2(h,1) \label{ineq:poisson}
\end{align}
where $c_N$  depends only on $N$.

Since dilations preserve harmonicity, we can extend \eqref{ineq:poisson} to any ball of radius $2r >0$.  To see this let
 $h_{2r}(x) = h(2rx)$ and notice for any $x \in B$ with $\lvert x \rvert \leq r$ that according to the appropriate normalisations  
\begin{align*}
\sup_{\lvert x \rvert \leq 1/2} \lvert h_{2r}(x) \rvert &\leq c_N M_2(h_{2r}, 1) \\
&= c_N \left( \int_S \lvert h(2ry) \rvert^2 \; \mathrm{d}\sigma(y) \right)^{1/2}\\
&= c_N \left( \int_{S(2r)} \lvert h(y) \rvert^2 \; \mathrm{d}\sigma_{2r}(y) \right)^{1/2}  = c_N M_2(h,2r).
\end{align*}
This yields the claim.
\end{proof}

\begin{prop} \label{prop:FHCh}
Let $h \in \harmsp$ be as defined in \eqref{defn:h}.  Then $h$ is a frequently hypercyclic vector 
in $\harmsp$ for the partial differentiation operator $\partial /\partial x_1$ for any strictly increasing sequence $(\ell_k)$ satisfying \eqref{defn:l_k}.
\end{prop}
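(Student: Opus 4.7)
The plan is to identify, for each polynomial $F_k$ in the fixed dense sequence, a subset $E_k \subset \mathbb{N}$ of positive lower density along which $T^\nu h$, with $T := \partial/\partial x_1$, approximates $F_k$ in $(\harmsp, d)$. Concretely, I define
\begin{equation*}
E_k := \bigl\{\, \nu = n^2 + j\ell_k : n = (2m-1)\ell_k 2^k \in \mathcal{A}_k,\ n \geq 10\ell_k,\ 1 \leq j \leq (2m-1)2^k \,\bigr\}.
\end{equation*}
For each admissible $n$ the $(2m-1)2^k$ contributed exponents all lie in $[n^2 + \ell_k,\, n^2 + n]$, and these intervals are pairwise disjoint by the observation at the end of Section \ref{sec:construction}. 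A counting argument parametrised by $m$ should give $|E_k \cap [1, M]| \gtrsim M/(\ell_k^2 2^k)$ as $M \to \infty$, so $E_k$ has positive lower density.

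Next I would compute $T^\nu h$ for $\nu = n^2 + j\ell_k \in E_k$. Since the homogeneity degrees of $Q_{n'}$ lie below $(n'+1)^2$, one has $T^\nu Q_{n'} = 0$ for $n' < n$. Applying Lemmas \ref{lma:PrimitiveProps} and \ref{lm:compat}, and using $\ell_k > m_k$ (from \eqref{defn:l_k}) to annihilate the terms of $Q_n$ with $i < j$, one obtains
\begin{equation*}
T^\nu Q_n = F_k + \sum_{s=1}^{(2m-1)2^k - j} P_{s\ell_k}(F_k),
\end{equation*}
while for $n' > n$ each summand of $T^\nu Q_{n'}$ becomes a primitive $P_s(F_{k'})$ of index $s = (n')^2 + i\ell_{k'} - \nu \geq (n+1)^2 - (n^2 + n) = n + 1$. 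By Lemma \ref{lma:convergenceLUC}, bounding $\sup_{|x| \leq R} |T^\nu h(x) - F_k(x)|$ reduces to estimating $M_2(T^\nu h - F_k, 2R)$, which splits by orthogonality into the finite tail above and the sum over $n' > n$.

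Both error sources will be controlled via \eqref{ineq:GrowthEst}, which supplies the factorial decay $M_2(P_s(F), 2R) \leq c_m (2R)^{s+m}/(s+m)! \cdot M_2(F, 1)$. The tail from $Q_n$ is dominated by its leading term of order $(2R)^{\ell_k}/\ell_k! \cdot M_2(F_k, 1)$, hence is small whenever $\ell_k$ is large relative to $R$; the sum over $n' > n$ is dominated by a factorial-decaying series beginning at index $n+1$, hence vanishes as $n \to \infty$. To absorb the potentially large constants $c_{m_{k'}}$ and $M_2(F_{k'}, 1)$ arising from the arbitrary polynomials $F_{k'}$, the condition \eqref{defn:l_k} will be invoked: it forces $c_{m_{k'}} M_2(F_{k'}, 1) \lesssim \ell_{k'} 2^{-k'}$, which, together with the count $(2m'-1)2^{k'} \sim n'/\ell_{k'}$ of summands inside each $Q_{n'}$ and the super-exponential decay of $(2R)^s/s!$ for $s \geq n+1$, should yield absolute convergence of the full double sum.

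The hard part is precisely this uniform estimate over all $n' > n$ across every family $\mathcal{A}_{k'}$ simultaneously; once secured, the proof concludes as follows. Given a non-empty open $U \subset \harmsp$ and $F \in U$, choose a basic neighbourhood $V = \{g : \sup_{|x| \leq R} |g - F| < \varepsilon\} \subset U$. Exploiting the fact that every polynomial in the dense sequence is repeated infinitely often, one picks $k$ with $\sup_{|x| \leq R} |F_k - F| < \varepsilon/3$ and with $\ell_k$ so large that the tail from $Q_n$ is at most $\varepsilon/3$ on $B(R)$, and then takes $\nu_0$ large enough that for $\nu \in E_k$ with $\nu \geq \nu_0$ the sum over $n' > n$ is also below $\varepsilon/3$. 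Hence $E_k \cap [\nu_0, \infty) \subset \{\nu : T^\nu h \in U\}$, and the latter inherits positive lower density from $E_k$, proving frequent hypercyclicity.
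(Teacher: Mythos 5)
Your proposal follows essentially the same route as the paper: the same set $E_k$ (the paper's $\mathcal{B}_k$) of positive lower density, the same decomposition of $\partial^s h/\partial x_1^s - F_k$ into the remaining tail of $Q_n$ plus the sum over $n' > n$, the same factorial-decay bound from Lemma~\ref{lma:PrimitiveProps} together with \eqref{defn:l_k} to absorb the $F_{k'}$-dependent constants, and the same use of Lemma~\ref{lma:convergenceLUC} to pass from $M_2$ on a sphere to the sup-norm on a ball. The uniform estimate you flag as the hard part is carried out in the paper by choosing $\ell_k \geq (er)^2$, bounding all primitive indices below by $2n$, and summing geometric series in both the inner index $q$ and the outer index $j$; one small slip in your sketch is that \eqref{defn:l_k} controls $c_{m_{k'}} M_2^2(F_{k'},1)$, not $c_{m_{k'}} M_2(F_{k'},1)$, and it is the squared quantity that appears in the $M_2^2$-estimates.
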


\begin{proof}
We first note that for $N=2$ the complete result in Theorem \ref{thm:growthRate} can be deduced from the corresponding case for the entire functions in \cite{DS12}. In fact, let $f_0 = u_0 + iv_0$ be a frequently hypercyclic entire function for the differentiation operator $g \mapsto  g'$
having minimal $L^2$-growth on $S(r)$. Then $u_0 = \mathrm{Re}(f_0)$ is a harmonic function on $\mathbb{C} = 
\mathbb{R}^2$ and $M_2(u_0, r) \le M_2(f_0, r)$ for all $r > 0$.
It is not difficult to check that $u_0$ is a
frequently hypercyclic vector for $\partial /\partial x_1$ in ${\mathcal{H}(\mathbb{R}^2)}$.
Consequently we may (and will) assume for the rest of the argument that $N \geq 3$.

To begin the actual argument, for any $k \ge 1$ and $n = (2m-1)\ell_k 2^k \in \mathcal{A}_k$  
with $n \geq 10\ell_k$  let
\begin{equation*}
\mathcal{B}_{n,k} = \left\lbrace  n^2 + j\ell_k  \, : \,   1 \leq j \leq (2m-1)2^k \right\rbrace.
\end{equation*}
We claim that the union
\begin{equation*}
\mathcal{B}_k = \bigcup_{n \in \mathcal{A}_k} \mathcal{B}_{n,k}
\end{equation*}
has positive lower density for any  $k \geq 1$.
In fact, suppose $n = (2m-1)\ell_k 2^k \in \mathcal{A}_k$ 
for some $m \ge 1$ and consider a given  integer $t \in [n^2,\, n^2 + n]$.
By inspection $\mathcal{B}_k$ contains $(2u - 1)2^k - 1$ integers
from the interval 
\begin{equation*}
\left[ (2u-1)^2\ell_k^2 2^{2k},\, (2u+1)^2 \ell_k^2 2^{2k} \right)
\end{equation*}
 for each $u = 1, \dotsc, m-1$, so that
\begin{align*}
\frac{\# \left( \mathcal{B}_{k} \cap \left\lbrace s \in \mathbb{N} : 0 \leq s \leq t \right\rbrace \right)}{t} & \geq
\frac{ 2^k \left(  \sum_{u=1}^{m-1} (2u-1)\right)  - (m-1)}{(2m+1)^{2} \, \ell_k^2 \, 2^{2k}} \\
& = \frac{(m-1)^{2}}{(2m+1)^{2} \, \ell_k^2 \, 2^k} - \frac{(m-1)}{(2m+1)^{2} \, \ell_k^2 \, 2^{2k}}
\end{align*}
which tends to $\frac{1}{4 \ell_k^2 \, 2^{k}}$ as $m \to \infty$. 
Clearly this estimate yields that $\mathcal{B}_k$ 
has positive lower density.

Fix $r>0$ and let $\widetilde{F} \in (F_u)_{u\geq1}$ as well as  $\varepsilon > 0$ be given.  
By construction we can find an $F_k$ from our dense sequence $(F_k)$ such that 
$\widetilde{F} = F_k$ and the corresponding integer $\ell_k \geq  (er)^2$.
We  claim  for $k$ large enough it holds that
\begin{equation}   \label{claimFHC}
M_2\left( \widetilde{F} - \frac{\partial^s}{\partial x_1^s} h, r \right) \leq \varepsilon
\end{equation}
for  all $s \in \mathcal{B}_{n,k}$, where   $n = (2m-1)\ell_k 2^k \in \mathcal{A}_k$ and 
$n \geq 10 \ell_k$.

Suppose  $s =  n^2 + t\ell_k \in \mathcal{B}_{n,k}$ for some integer $1 \leq t \leq (2m-1)2^k$.
To compute $\frac{\partial^s}{\partial x_1^s} h$ note first that 
$\frac{\partial^s}{\partial x_1^s} Q_{n'} = 0$ for $n ' < n$. Moreover, by construction  and
Lemma \ref{lm:compat} we get after relabelling that 
\begin{align*}
\frac{\partial^s}{\partial x_1^s} Q_n 
& = \sum_{j=1}^{(2m-1)2^k} \frac{\partial^s}{\partial x_1^s} P_{n^2 + j\ell_{k}}(F_k)
= F_k + \sum_{j=t+1}^{(2m-1)2^k} \frac{\partial^s}{\partial x_1^s} P_{n^2 + j\ell_k}(F_k)\\
& = F_k + \sum_{j=1}^{(2m-1)2^k-t} P_{j \ell_k}(F_k).
\end{align*}
Here we used that  $\frac{\partial^s}{\partial x_1^s} P_{n^2 + j\ell_{k}}(F_k) = 0$
for $j < t$  since  $n^2 + j\ell_k + m_k < s$.
The next term in $h$ is $Q_{n+1} \equiv 0$, since $n+1$ is odd and hence
\begin{equation}  \label{eq:sumsOfh}
 \frac{\partial^s}{\partial x_1^s} h  - F_k = \sum_{j=1}^{(2m-1)2^k-t} P_{j\ell_k} \left( F_k \right) + \sum_{j=n+2}^\infty \frac{\partial^s}{\partial x_1^s} Q_j  \eqqcolon F+G.
\end{equation}

Before we proceed to estimate $M_2^2 \left( \frac{\partial^s}{\partial x_1^s} h  - F_k, r \right)$,  it is   convenient to calculate first an upper bound that will be needed later in this proof.  For any $n, k \in \mathbb{N}$ we consider the primitive $P_n (F_k)$, where $F_k$ is a harmonic polynomial from our dense sequence $(F_k)$.
By orthogonality and \eqref{decompF_k} we have for any $r>0$ that
\begin{equation*}
M_2^2  \left( P_n (F_k), r \right) \leq \sum_{i=0}^{m_k} M_2^2 \left( P_n (H_{k,i}), r \right) 
\end{equation*}
and furthermore by  \eqref{ineq:primUpperBnd}, \eqref{ineq:GrowthEst} and homogeneity it follows for each $i= 0,\dotsc, m_k$ that
\begin{align*}
M_2^2 \left( P_n (H_{k,i}), r \right)  \leq \frac{c_i r^{2(n+i)} M_2^2 \left( H_{k,i}, 1 \right)}{(n+i)!^2 (n+i+1)^{N-2}}.
\end{align*}
Applying Stirling's formula and the fact that $c_i \leq c_{m_k}$  for  $i = 0, \dotsc, m_k$, it follows that
\begin{equation*}
\frac{c_i r^{2(n+i)}}{(n+i)!^2 (n+i+1)^{N-2}} \leq \frac{c_{m_k} (er)^{2(n+i)}}{2\pi (n+i)^{2(n+i)} (n+i)^{N-1}}.
\end{equation*}
Combining the above and using orthogonality we get that
\begin{equation}  \label{bound:PrimF_k}
M_2^2  \left( P_n (F_k), r \right) \leq \frac{c_{m_k}}{2\pi n^{N-1}} \sum_{i=0}^{m_k} \frac{ (er)^{2(n+i)} M_2^2 \left( H_{k,i}, 1 \right)}{(n+i)^{2(n+i)}}.
\end{equation}

Following these preparations we next  estimate $M_2^2 \left(F,r \right)$, where $F$ is as defined on the right hand side of \eqref{eq:sumsOfh}.  For each harmonic polynomial $P_{j\ell_k} \left( F_k \right)$ we get from  \eqref{bound:PrimF_k} that
\begin{equation*}
M_2^2 \left( P_{j\ell_k} \left( F_k \right), r \right) \leq \frac{c_{m_k}}{2\pi \ell_k^{N-1}} \sum_{i=0}^{m_k} \frac{(er)^{2(j \ell_k + i)} M_2^2 \left( H_{k,i}, 1  \right)}{(j\ell_k + i)^{2(j\ell_k + i)}}.
\end{equation*}
By the fact that $\ell_k \geq  (er)^2$ it follows that 
\begin{equation*}
\frac{(er)^{2(j \ell_k + i)}}{(j\ell_k + i)^{(j\ell_k + i)}}  \leq 1
\end{equation*}
and hence by orthogonality 
\begin{equation*}
M_2^2 \left( P_{j\ell_k} \left( F_k \right), r \right) \leq \frac{c_{m_k}}{2\pi  \ell_k^{N-1} \ell_k^j} M_2^2( F_k, 1).
\end{equation*}
By summing up we have that
\begin{align*}
 \sum_{j=1}^{(2m-1)2^k-t}  M_2^2 \left( P_{j\ell_k} \left( F_k \right), r \right) &\leq \frac{c_{m_k}}{2\pi \ell_k^{N-1}} M_2^2( F_k, 1) \sum_{j=1}^\infty  \frac{1}{\ell_k^j}\\
 &\leq \frac{1}{2\pi \ell_k^{N-1}} \cdot \frac{c_{m_k} M_2^2( F_k, 1) }{\ell_k} \left( \frac{1}{1 - \frac{1}{\ell_k}} \right) \\
 &\leq \frac{1}{2 \ell_k^{N-1}} 
\end{align*} 
where we applied \eqref{defn:l_k} and a geometric series estimate. We conclude that
\begin{equation}\label{estimate01}
 M_2^2 \left( F, r\right) \le \frac{1}{2 \ell_k^{N-1}}.
\end{equation}

Next we estimate $M_2^2 \left(G,r \right)$, where $G$ 
is defined on the right hand side of \eqref{eq:sumsOfh}.
Suppose $j \geq n+2$ with $j = (2m'-1)\ell_{k'} 2^{k'} \in \mathcal{A}_{k'}$ for some 
$k', m' \ge 1$. 
By orthogonality we have 
\begin{equation*}
M_2^2 \left(\frac{\partial^s}{\partial x_1^s} Q_j, r \right) = 
\sum_{q=1}^{(2m'-1)2^{k'}} M_2^2 \left( P_{j^2 -s + q \ell_{k'}}(F_{k'}), r \right)
\end{equation*}
and  by  \eqref{bound:PrimF_k} it follows for each harmonic polynomial $P_{j^2 -s + q \ell_{k'}}(F_{k'})$ that
\begin{align*}
M_2^2 & \left( P_{j^2 -s + q \ell_{k'}}(F_{k'}), r \right) \\
&\leq \frac{c_{m_{k'}}}{2\pi (j^2 -s + q \ell_{k'})^{N-1}} \sum_{i=0}^{m_{k'}} \frac{(er)^{2(j^2 -s + q \ell_{k'}+i)} M_2^2 \left( H_{k',i}, 1 \right)}{(j^2 -s + q \ell_{k'} +i)^{2(j^2 -s + q \ell_{k'} +i)}}.
\end{align*}
Recall next that $1 \leq t \leq (2m-1)2^k$ and $n \geq t\ell_k$,   so that
\begin{equation}  \label{ineq:j-s}
j^2 -s  \geq (n+2)^2   - n^2  - t \ell_k  > 2n.
\end{equation}
Applying the fact that $n \geq 10 \ell_k \geq  (er)^2$ one also has 
\begin{equation*}
\frac{ (er)^{2(j^2 -s + q \ell_{k'} +i)} }{(2n + q \ell_{k'} )^{j^2 -s + q \ell_{k'} +i} } \leq 1
\end{equation*}
for $i = 0,\dotsc, m_{k'}$. By adding these estimates, applying \eqref{ineq:j-s} and using the fact that $n \geq 10 \ell_k$ we get  again by orthogonality  that
\begin{equation*}
 M_2^2  \left( P_{j^2 -s + q \ell_{k'}}(F_{k'}), r \right) 
 \le   \frac{c_{m_{k'}} M_2^2 \left( F_{k'}, 1 \right)}{2\pi \ell_k^{N-1} (2n + q \ell_{k'})^{j^2 -s + q \ell_{k'}} }.
\end{equation*}
From this estimate we get that 
\begin{align*} 
M_2^2 \left(\frac{\partial^s}{\partial x_1^s} Q_j, r \right)  &= \sum_{q=1}^{(2m'-1)2^{k'}}  M_2^2 \left( P_{j^2 -s + q \ell_{k'}}(F_{k'}), r \right) \\
&\leq \sum_{q=1}^\infty \frac{c_{m_{k'}} M_2^2 \left( F_{k'}, 1 \right)}{2\pi \ell_k^{N-1} (2n + q \ell_{k'})^{j^2 -s + q \ell_{k'}} }  \\
&\leq \frac{c_{m_{k'}} M_2^2 \left( F_{k'}, 1 \right)}{2\pi \ell_k^{N-1} (2n)^{j^2 -s} }   \sum_{q=1}^\infty \frac{1}{\ell_{k'}^q} \\
&\leq \frac{1}{2\pi \ell_k^{N-1} (2n)^{j^2 -s} }  \cdot \frac{c_{m_{k'}} M_2^2 \left( F_{k'}, 1 \right)}{\ell_{k'}} \left( \frac{1}{1 - \frac{1}{\ell_{k'}}} \right) \\
&\leq \frac{1}{2 \ell_k^{N-1} (2n)^{j^2 -s} }
\end{align*}
where we again applied \eqref{defn:l_k} and a crude geometric series estimation.

By summing over  $j \geq n+2$  we arrive at  
\begin{align}
\sum_{j=n+2}^\infty M_2^2 \left(\frac{\partial^s}{\partial x_1^s} Q_j, r \right) \leq \frac{ 1 }{2 \ell_k^{N-1}} \sum_{j=n+2}^\infty  \frac{ 1 }{ (2n)^{j^2 -s} } \leq \frac{ 1 }{2 \ell_k^{N-1} }   \label{estimate02}
\end{align}
where we again used  a geometric series estimation  and that $n \geq 10 \ell_k$.

By combining \eqref{estimate01} and \eqref{estimate02}, and taking into account 
orthogonality and the fact that $N \geq 3$, we finally get  that
\begin{equation*}
M_2^2\left( \widetilde{F} - \frac{\partial^s}{\partial x_1^s} h, r \right) \leq  \frac{1}{\ell_k^2}.
\end{equation*}
Since $\widetilde{F} = F_k$ for infinitely many $k$ and the sequence $(\ell_k)$ is 
strictly increasing, we can certainly find $k$ such that $\ell_k^{-2} < \varepsilon$. 

In conclusion, we have shown that we can estimate $\widetilde{F}$ up to any given 
$\varepsilon > 0$ by partial derivatives  of $h$ 
associated with $\mathcal{A}_k$ in the $L^2$-norm 
on the sphere $S(r)$ for any fixed $r > 0$. By applying
Lemma \ref{lma:convergenceLUC} we obtain a similar estimate 
in the sup-norm on the closed ball $\overline{B}(\frac{r}{2})$. This completes
the proof of Proposition \ref{prop:FHCh}, since for each $k$ the set $\mathcal{B}_k$ 
corresponding to  $\mathcal{A}_k$ has positive lower density.
\end{proof}

This is  a suitable point to verify that  $h$ defined in \eqref{defn:h} does indeed define a harmonic function 
on $\mathbb{R}^N$. 

\begin{remark}\label{rm:harmonic}
We claim that $h \in \harmsp$. Let $r > 0$ be fixed. 
Observe that if $n \in \mathcal{A}_k$ and we take $s = 0$ 
in the growth estimates for the 
remainder $G_0 = \sum_{j=n+2}^\infty Q_j$ in  \eqref{eq:sumsOfh}, then by following the 
argument from Proposition \ref{prop:FHCh} we obtain that
\begin{equation*}
M_2^2\left( \sum_{j=n+2}^\infty Q_j, r \right) \leq  \frac{1}{\ell_k^2}
\end{equation*}
for all large enough $k$ (depending on $r$). 
This implies by Lemma \ref{lma:convergenceLUC} that the remainder term  of the series 
defining $h$ in \eqref{defn:h} converges uniformly to $0$ on the closed ball $\overline{B}(r/2)$ for any fixed $r > 0$. 
By completeness the partial sums of $h$
then converge to a harmonic function defined on the whole of $\mathbb{R}^N$. 
\end{remark}

\section{Growth Rate of $h$}  \label{sec:GrowthRate}

In this section we complete the proof of Theorem \ref{thm:growthRate}
by showing that the frequently hypercyclic harmonic function $h$ from \eqref{defn:h}
has the desired minimal $L^2$-growth rate as soon as the sequence $(\ell_k)$ from
\eqref{defn:l_k} grows fast enough.
For this purpose we need the following useful  lemma which is a variant of \cite[lemma 2.2]{BBGE10}. 
\begin{lma}  \label{lma:growthRate} 
Let  $N\geq 2$ be given. Then there exists a constant $C>0$ such that for 
all given integers $\ell\geq 1$, $u\in \{0,\dotsc, \ell-1\}$ and radii $r>0$ it holds that
\begin{equation*}
\sum_{k=\ell}^\infty \frac{r^{2(\ell k+u)}}{(\ell k +u)!^2(\ell k+u+1)^{N-2}} \leq \frac{C}{\ell} \cdot \frac{e^{2r}}{r^{N-3/2}}.
\end{equation*}
\end{lma}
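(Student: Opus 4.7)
The plan is to reduce the given sum to a Laplace-type integral via Stirling's formula, then compare the arithmetic-progression sum to the integral using unimodality of the integrand. The principal technical point will be making the Laplace-method bound on the integral uniform in $r$: since the Gaussian approximation breaks down in the tails, I would split the integration at $|x-r| \le r/2$ and handle the tails by the elementary estimate $1 + \log t \le t$ valid for all $t > 0$.

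First, Stirling's bound $n! \ge \sqrt{2\pi n}(n/e)^n$ together with $(n+1)^{N-2} \ge n^{N-2}$ (valid for $N \ge 2$ and $n \ge 1$) would give
\[
T_n := \frac{r^{2n}}{n!^2 (n+1)^{N-2}} \le \frac{1}{2\pi n^{N-1}} \Bigl(\frac{er}{n}\Bigr)^{2n} =: f(n).
\]
A direct differentiation yields $(\log f)'(x) = 2\log(r/x) - (N-1)/x$, from which one sees that on $[1, \infty)$ the function $f$ is either monotonically decreasing (when $r$ is less than some constant depending on $N$, a case in which the claim is trivial since the summand then decays faster than any geometric series) or else has a unique maximum at some $x^* \in ((N-1)/2, r)$, with peak value $f(x^*) \le C\, e^{2r}/r^{N-1}$. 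I would assume henceforth that $r$ exceeds this threshold.

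Using piecewise monotonicity of $f$ on $(0, x^*]$ and $[x^*, \infty)$, I would compare each interval $[\ell k + u, \ell(k+1)+u]$ to the integral of $f$ over an adjacent interval of length $\ell$, obtaining
\[
\sum_{k=\ell}^\infty f(\ell k + u) \;\le\; M + \frac{1}{\ell} \int_0^\infty f(x) \, dx,
\]
where $M$ denotes the maximum value of $f$ on the AP index set, which is at most $f(x^*)$ in general. The integral would then be evaluated by Laplace's method: the substitution $x = r + s\sqrt{r}$ expands $2x(1 + \log(r/x)) = 2r - s^2 + O(s^3/\sqrt{r})$ in the peak region, while in the tails $|x-r| > r/2$ the inequality $1 + \log t \le t$ applied with $t = x/r$ gives exponential decay in $|x-r|$. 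Adding up would give $\int_0^\infty f(x)\, dx \le C\, e^{2r}/r^{N-3/2}$.

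Finally, I would bound $M$ according to the size of $\ell$. If $\ell \le \sqrt{r}$, then $M \le f(x^*) \le C\, e^{2r}/r^{N-1} \le C\, e^{2r}/(\ell r^{N-3/2})$. If instead $\ell > \sqrt{r}$, then $\ell^2 + u > r > x^*$, so the sum starts strictly past the peak and $M = f(\ell^2+u)$; the inequality $1 + \log t \le t$ applied with $t = r/(\ell^2+u)$ would yield $(er/(\ell^2+u))^{2(\ell^2+u)} \le e^{2r}$, hence $M \le e^{2r}/\ell^{2(N-1)}$, and since $r < \ell^2 + u \le 2\ell^2$ one has $r^{N-3/2} \le 2^{N-3/2} \ell^{2N-3}$, giving $M \le C\, e^{2r}/(\ell r^{N-3/2})$. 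Combining these estimates would produce the stated inequality.
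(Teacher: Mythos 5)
Your proposal follows essentially the same route as the paper: both arguments reduce the problem to (a) showing the full sum over all $n$ is $\lesssim e^{2r}/r^{N-3/2}$, (b) comparing an arithmetic-progression subsum to the full sum (or integral) plus a maximal term, using unimodality of the summand, and (c) a case analysis on $\ell \lessgtr \sqrt{r}$ (equivalently $r \lessgtr \ell^2$) to control the maximal term. The paper implements (a) by citing the known estimate $\sum_n r^{2n}/n!^2 \lesssim e^{2r}/r^{1/2}$ and splitting at $n = \lfloor r/2\rfloor$, while you derive it from scratch by Stirling plus Laplace's method on $f(x)=(2\pi x^{N-1})^{-1}(er/x)^{2x}$; the paper codifies (b) in Lemma~\ref{lma:jokatoinen} together with a log-concavity check, which is exactly your informal piecewise-monotonicity comparison. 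Your step (c) matches the paper's two cases verbatim in spirit.

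Two technical slips you should repair (neither is fatal). First, for $N\ge3$ the integral $\int_0^\infty f(x)\,dx$ diverges at $x=0$ because $f(x)\sim (2\pi x^{N-1})^{-1}$ there, so the display $\sum_{k\ge\ell} f(\ell k+u)\le M+\ell^{-1}\int_0^\infty f$ is vacuous as written; since the smallest index in play is $\ell^2+u\ge1$, the comparison should be made against $\int_1^\infty f(x)\,dx$ (or against the adjacent intervals starting from $\ell^2+u-\ell$), and then everything goes through. Second, the elementary bound $1+\log t\le t$ alone only yields $(er/x)^{2x}\le e^{2r}$, which gives no decay in $|x-r|$; to justify the exponential decay claimed for the tails $|x-r|>r/2$ you need the strict-concavity refinement $1+\log t - t\le -c\min\{(t-1)^2,|t-1|\}$ (or simply observe that for $t\notin[2/3,3/2]$ one has $1+\log t - t\le -c_0<0$ and then absorb a geometric factor). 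With those adjustments your argument is correct and effectively reproves the BBGE10 series estimate internally, which makes it slightly more self-contained than the paper's version.
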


For the proof of the lemma we record the following useful observation.

\begin{lma}\label{lma:jokatoinen} 
Let $(a_n)_{n\geq 0}$ be a summable sequence of non-negative real numbers, for which there exists $n_0\geq 2$ such  that the elements $n \mapsto a_n$ are increasing for $n\leq n_0$
and decreasing for $n\geq n_0$. Then for any $\ell\geq 2$ and 
any $u\in\{ 0,\dotsc, \ell-1\}$ we have 
\begin{equation*}
\sum_{k=1}^\infty a_{k\ell+u}\;\leq\; \ell^{-1} \left(\sum_{n=0}^\infty a_n\right) +2\sup_{n\geq 0}a_n.
\end{equation*}
In the case when the sequence $(a_n)_{n\geq 0}$ is decreasing we have the stronger estimate
\begin{equation*}
\sum_{k=1}^\infty a_{k\ell+u} \;\leq\; \ell^{-1}\left( \sum_{n=0}^\infty a_n \right). 
\end{equation*}
\end{lma}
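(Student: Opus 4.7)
I would first dispose of the second (stronger) assertion, since it encapsulates the core idea. When $(a_n)$ is decreasing, the term $a_{k\ell+u}$ is the smallest among the $\ell$ consecutive values $a_{(k-1)\ell+u+1},\dots,a_{k\ell+u}$, so
\begin{equation*}
\ell\, a_{k\ell+u} \leq \sum_{j=1}^{\ell} a_{(k-1)\ell+u+j}.
\end{equation*}
Summing this over $k\geq 1$, the right-hand side telescopes to $\sum_{n\geq u+1}a_n \leq \sum_{n\geq 0}a_n$, and dividing by $\ell$ gives the stronger bound.

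For the unimodal case the strategy is to split the sum according to whether the natural comparison block lies in the increasing region $\{0,\dots,n_0\}$ or the decreasing region $\{n_0,n_0+1,\dots\}$, absorbing the (at most two) indices whose block straddles $n_0$ into the error term $2\sup_n a_n$. Concretely, let $k^*\geq 0$ be the unique integer with $k^*\ell+u\leq n_0<(k^*+1)\ell+u$. For $1\leq k\leq k^*-1$ the block $[k\ell+u,(k+1)\ell+u-1]$ lies entirely in the increasing region, so $a_{k\ell+u}$ is its minimum (being leftmost) and
\begin{equation*}
\ell\, a_{k\ell+u} \leq \sum_{j=0}^{\ell-1} a_{k\ell+u+j}.
\end{equation*}
For $k\geq k^*+2$ the block $[(k-1)\ell+u+1,k\ell+u]$ lies in the decreasing region and the monotone estimate of the previous paragraph applies verbatim. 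The two potentially transitional terms $a_{k^*\ell+u}$ and $a_{(k^*+1)\ell+u}$ are each bounded trivially by $\sup_n a_n$.

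Summing the three groups of estimates, the comparison-index sets for $k\leq k^*-1$ and $k\geq k^*+2$ are disjoint subsets of $\{0,1,\dots\}$ (they amount to $[\ell+u,\, k^*\ell+u-1]$ and $[(k^*+1)\ell+u+1,\infty)$ respectively), so their combined right-hand side is at most $\sum_{n\geq 0}a_n$; together with the $2\ell\sup_n a_n$ contribution from the transitional terms and divided by $\ell$, this yields the claim.

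\textbf{Main obstacle.} There is no substantial difficulty; the only care required is in the bookkeeping, to verify that the increasing- and decreasing-block comparisons use disjoint index ranges and that the edge cases $k^*\in\{0,1\}$ (where only one transitional term occurs in the sum starting at $k=1$) are covered, which is automatic since the allowance $2\sup_n a_n$ is generous.
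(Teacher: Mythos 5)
Your proposal is correct and follows essentially the same argument as the paper: compare each sampled term $a_{k\ell+u}$ with a block of $\ell$ consecutive terms (a left-shifted block in the decreasing region, a right-shifted block in the increasing region), observe that the comparison blocks are pairwise disjoint so their total is at most $\sum_n a_n$, and absorb the at-most-two indices whose block straddles $n_0$ into the $2\sup_n a_n$ term. The paper phrases the split via the conditions $k\ell+u\leq n_0-\ell+1$ and $k\ell+u\geq n_0+\ell$ rather than via your $k^*$, but the content is identical.
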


\begin{proof} Let  $k\geq 0$ be given. If $k\ell+u\leq n_0-\ell+1$ we obtain that
\begin{equation*}
a_{k\ell+u}\leq \ell^{-1}(a_{k\ell+u}+a_{k\ell+u+1} + \cdots +a_{k\ell+u+\ell-1})
\end{equation*}
since the sequence $(a_n)$ is increasing for these indices.  
Correspondingly, if $k\ell+u\geq n_0+\ell$ we get that 
\begin{equation*}
a_{k\ell+u}\leq \ell^{-1}(a_{k\ell+u-\ell+1}+a_{k\ell+u-\ell+2} + \cdots +a_{k\ell+u})
\end{equation*} 
since $(a_n)$ is decreasing here. 
We obtain the claim by summing these estimates over all possible values of $k$ and noting that
all indices $k$ such that $k\ell+u\not\in [n_0-\ell+1,n_0+\ell]$ are covered,  apart from at
most two.
\end{proof}

\begin{proof}[Proof of Lemma \ref{lma:growthRate}]
We may assume without loss of generality that $r \ge 2$. 
We start by recalling the following estimate, which can be found in \cite[Lemma 2.2]{BBGE10}
\begin{equation} \label{eq:help1b}
\sum_{k=0}^\infty \frac{r^{2n}}{n!^2} \lesssim \frac{e^{2r}}{r^{1/2}}, \quad \textrm{for } r>0.
\end{equation}
Here (and below) $\lesssim$ denotes an inequality up to a numerical constant. 
By noting that 
\begin{equation}
\frac{r^{2(n+1)}/(n+1)!^2}{r^{2n}/n!^2}= \left( \frac{r}{n+1} \right)^2
\end{equation}
and using Stirling's formula  to estimate both the $\lfloor r\rfloor^\mathrm{th}$ and the $\left( \lfloor r\rfloor +1 \right)^\mathrm{th}$ terms,  we  easily verify that the maximal term 
of the above series satisfies the estimate
\begin{equation}\label{eq:help1c}
\frac{r^{2n}}{n!^2}\lesssim \frac{e^{2r}}{r}.
\end{equation}
From \eqref{eq:help1b} we deduce immediately that
\begin{equation} \label{eq:help1}
\sum_{n\geq \floor{r/2}} \frac{r^{2n}}{n!^2(n+1)^{N-2}} \lesssim \frac{e^{2r}}{r^{N-3/2}}, \quad \textrm{for } r>0.
\end{equation}
On the other hand, by the monotonicity of the terms and Stirling's formula we have 
the crude bound
\begin{align}  \label{eq:apu1}
\sum_{n< \floor{r/2}} \frac{r^{2n}}{n!^2(n+1)^{N-2}} &\leq \sum_{n<\floor{r/2}}\frac{r^{2n}}{n!^2} \leq \floor{r/2} \left(\frac{\floor{r/2}^{\floor{r/2}}}{\floor{r/2}!}\right)^2 \nonumber\\
&\lesssim r \left( \frac{e^{r/2}}{r^{1/2}} \right)^2 = e^r \lesssim e^{2r} r^{3/2 -N}.
\end{align}
In combination with \eqref{eq:help1} this yields that
\begin{equation} \label{eq:help2}
\sum_{n=0}^\infty \frac{r^{2n}}{n!^2(n+1)^{N-2}}\lesssim\frac{e^{2r}}{r^{N-3/2}}, \quad \textrm{for } r>0.
\end{equation}
We next observe that \eqref{eq:apu1} implies that the maximal term among the  first $\floor{r/2}$ 
terms  of the series
\begin{equation*}
\sum_{n=0}^\infty \frac{r^{2n}}{n!^2(n+1)^{N-2}}
\end{equation*} 
is dominated by $e^r$. On the other hand, in view of \eqref{eq:help1c} the remaining terms of this series  have the upper bound 
$\lesssim e^{2r}r^{1-N}$.   
A fortiori, the latter bound is an upper bound for all the terms of the sum \eqref{eq:help2}. 

Next observe that for each $r > 0$ 
the function 
\begin{equation*}
n\mapsto \log\left(\frac{r^{2n}}{n!^2(n+1)^{N-2}}\right)
\end{equation*} 
is  concave for $n\geq n_0(N)$, where the bound only depends on $N$.
Assuming this for a moment, we complete the proof of Lemma \ref{lma:growthRate} as follows.
The concavity allows us to invoke Lemma  \ref{lma:jokatoinen} for large enough $\ell\geq \ell_0(N)$ and we deduce that
\begin{equation*}
\sum_{k=2\ell}^\infty \frac{r^{2(\ell k+u)}}{(\ell k +u)!^2(\ell k+u+1)^{N-2}}\lesssim  
\frac{C}{\ell} \cdot \frac{e^{2r}}{r^{N-3/2}} + e^{2r}r^{1-N}.
\end{equation*}
For $r \geq \ell^2$ this inequality immediately yields the desired estimate. 
On the other hand, if $r< \ell^2$ one notes that the left hand series is  decreasing starting from the index $k=\ell$, 
whence the claim follows directly from the second statement in Lemma \ref {lma:jokatoinen}.

To verify the claim about concavity we consider the related function
\begin{equation*}
\psi(x) \coloneqq 2x\log r -\left(  \log\left( \Gamma(x+1) \right) + (N-2)\log(x+1) \right).
\end{equation*}
It follows from (31) of \cite[p.~200]{Ahl78} that the second derivative of the logarithmic gamma 
function is
\begin{equation*}
\psi''(x) = - \sum_{n=0}^\infty \frac{1}{(x+n+1)^2} + \frac{N-2}{(x+1)^2}.
\end{equation*}
This implies that $\psi''(x) < 0$ for all $x > x_N$, since 
\begin{equation*}
\sum_{n=0}^{\floor{x}}  \frac{1}{(x+1+n)^2}  \ge \frac{x}{(2x+1)^2}.
\end{equation*}
\end{proof}

The  proof of Theorem \ref{thm:growthRate} is completed by the following proposition.

\begin{prop}\label{prop:growthRate}
Let $h \in \harmsp$ be as defined in \eqref{defn:h}.  Then 
for any given constant $C > 0$ there exists a choice of $(\ell_k)$ so that the 
$\partial/\partial x_1$-frequently hypercyclic harmonic function $h \in \harmsp$ satisfies
\begin{equation*}
M_2(h,r) \leq C \frac{e^r}{r^{N/2 - 3/4}}
\end{equation*}
for all  $r > 0$.
\end{prop}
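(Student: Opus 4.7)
The plan is to estimate $M_2^2(h, r)$ block by block, using the orthogonality already exploited in Section \ref{sec:FHCh} and then reducing the resulting sum to Lemma \ref{lma:growthRate}. By the pairwise disjointness of the homogeneity degrees across distinct $Q_n$,
\[
M_2^2(h, r) = \sum_{k=1}^\infty \sum_{\substack{n \in \mathcal{A}_k\\ n \geq 10\ell_k}} \sum_{j=1}^{(2m-1)2^k} M_2^2\bigl(P_{n^2 + j\ell_k}(F_k), r\bigr),
\]
where for each $n \in \mathcal{A}_k$ one writes $n = (2m-1)\ell_k 2^k$ with the corresponding $m \geq 1$. For each primitive I will invoke \eqref{bound:PrimF_k}; an application of Stirling's formula rewrites this as
\[
M_2^2\bigl(P_\nu(F_k), r\bigr) \lesssim c_{m_k} \sum_{i=0}^{m_k} \frac{r^{2(\nu+i)}\, M_2^2(H_{k,i}, 1)}{(\nu+i)!^2\,(\nu+i)^{N-2}},
\]
valid once $\nu$ is large compared with $m_k$, a form that matches the summand in Lemma \ref{lma:growthRate}.

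The key combinatorial observation is that every degree $\nu = n^2 + j\ell_k$ arising for $n \in \mathcal{A}_k$ is divisible by $\ell_k$: indeed $n = (2m-1)\ell_k 2^k$ forces $n^2 \equiv 0 \pmod{\ell_k^2}$. Consequently, for fixed $k$ and fixed $i \in \{0, \dotsc, m_k\}$, every exponent $\nu + i$ appearing in the triple sum above lies in the single residue class $i \pmod{\ell_k}$ (note $i < \ell_k$ since $\ell_k \geq 2m_k$), and satisfies $\nu + i \geq (10\ell_k)^2 \geq \ell_k^2$. Applying Lemma \ref{lma:growthRate} with $\ell = \ell_k$ and $u = i$ then yields
\[
\sum_{n \in \mathcal{A}_k} \sum_{j=1}^{(2m-1)2^k} \frac{r^{2(n^2+j\ell_k+i)}}{(n^2+j\ell_k+i)!^2\,(n^2+j\ell_k+i)^{N-2}} \leq \frac{C'}{\ell_k}\cdot\frac{e^{2r}}{r^{N-3/2}},
\]
with $C'$ depending only on $N$. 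Summing over $i$ and using the orthogonality identity $\sum_{i=0}^{m_k} M_2^2(H_{k,i}, 1) = M_2^2(F_k, 1)$ then produces the per-block bound
\[
\sum_{n \in \mathcal{A}_k} M_2^2(Q_n, r) \lesssim \frac{c_{m_k}\, M_2^2(F_k, 1)}{\ell_k}\cdot\frac{e^{2r}}{r^{N-3/2}}.
\]

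A final summation over $k$ gives
\[
M_2^2(h, r) \lesssim \frac{e^{2r}}{r^{N-3/2}} \sum_{k=1}^\infty \frac{c_{m_k}\, M_2^2(F_k, 1)}{\ell_k},
\]
and the series on the right can be made smaller than any prescribed positive value by strengthening the lower bound \eqref{defn:l_k} on $\ell_k$ by a factor depending only on the target constant $C$; this specifies the finalised choice \eqref{defn:choice}. Taking square roots yields the desired estimate $M_2(h, r) \leq C e^r/r^{N/2-3/4}$ for all $r > 0$. The main obstacle I anticipate is the bookkeeping needed to pass from \eqref{bound:PrimF_k} to the exact form matching Lemma \ref{lma:growthRate}, in particular handling the $i$-shift in both the factorial and the polynomial denominator, but since $\nu \gg m_k$ throughout this shift costs only universal constants that can be absorbed into $c_{m_k}$.
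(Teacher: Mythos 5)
Your proof is correct and follows essentially the same route as the paper's: decompose $M_2^2(h,r)$ block by block using orthogonality, note that the degrees $n^2+j\ell_k$ lie in a single residue class modulo $\ell_k$ and exceed $\ell_k^2$, apply Lemma~\ref{lma:growthRate} with $\ell=\ell_k$ and $u=i$, and finally tune $(\ell_k)$ via \eqref{defn:choice}. The only (cosmetic) detour is your passage through the Stirling'd bound \eqref{bound:PrimF_k} and back, whereas the paper applies \eqref{ineq:primUpperBnd} and \eqref{ineq:GrowthEst} directly to land on the form matching the lemma, avoiding the constant-absorption bookkeeping you flag at the end.
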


\begin{proof}
Recall that $h$  has the representation
$h = \sum_{k=1}^\infty \sum_{n \in \mathcal{A}_k} Q_n$
whence
\begin{equation*}
M_2^2 \left( h, r \right) = \sum_{k=1}^\infty \sum_{n \in \mathcal{A}_k} M_2^2 \left(Q_n, r \right)
\end{equation*}
by orthogonality  for any $r>0$.
Moreover, for any fixed $k$ and any $n = (2m-1)\ell_k 2^k \in \mathcal{A}_k$ we  further obtain, by orthogonality, \eqref{decompF_k}, \eqref{ineq:primUpperBnd}, \eqref{ineq:GrowthEst},  as well as changing the order of summation, that   
\begin{align*}
 M_2^2 & \left( Q_n, r \right) =  \sum_{j=1}^{(2m-1)2^k}  M_2^2 \left( P_{n^2 + j\ell_k} (F_k), r \right) \\
&=  \sum_{j=1}^{(2m-1)2^k}  \sum_{q=0}^{m_k} M_2^2 \left( P_{n^2 + j\ell_k} \left(  H_{k,q} \right), r \right)  \\
& \leq  c_{m_k} \sum_{q=0}^{m_k} M_2^2 \left(  H_{k,q} , 1 \right) \sum_{j=1}^{(2m-1)2^k}  \frac{r^{2(n^2 + j\ell_k + q)} }{(n^2 + j\ell_k + q)!^2 (n^2 + j\ell_k + q+1)^{N-2}}.
\end{align*}
We also used above that $c_q \leq c_{m_k}$ for  $0 \leq q \leq m_k$.

Recall next that the sets $\left\lbrace n^2 + j \ell_k : 1 \le j \le (2m-1)2^k \right\rbrace$ are pairwise disjoint as the $n = (2m-1)\ell_k 2^k \in \mathcal{A}_k$ vary.
 Hence we may add the above estimates over 
the disjoint blocks of indices corresponding to 
$n \in \mathcal{A}_k$. 
Using the facts that $n \ge 10 \ell_k$, $\ell_k$ divides $n$, by orthogonality and applying Lemma  \ref{lma:growthRate} we obtain that
\begin{align}
\sum_{n \in \mathcal{A}_k} M_2^2  \left( Q_n, r \right)  & \le   c_{m_k}   \sum_{q=0}^{m_k} 
M_2^2 \left(  H_{k,q} , 1 \right) 
\sum_{j=2\ell_k}^{\infty}  \frac{r^{2(j\ell_k + q)} }{(j\ell_k + q)!^2 (j\ell_k + q+1)^{N-2}}  \nonumber \\
 & \le  c_{m_k} \sum_{q=0}^{m_k} \frac{C'M_2^2 \left(  H_{k,q} , 1 \right)}{\ell_k} \cdot 
 \frac{e^{2r}}{r^{N-3/2}}  \nonumber \\  
 & = C' \frac{c_{m_k} M_2^2  \left(  F_k, 1 \right)}{\ell_k} \cdot \frac{e^{2r}}{r^{N-3/2}}  \label{estimateQn}
\end{align}
 where the numerical constant $C' > 0$ from Lemma \ref{lma:growthRate}  is independent of   $\ell_k$ and $q = 0, \dotsc, m_k$.  

As the final step we successively choose the integers of 
the increasing sequence $(\ell_k)$   large enough, so that \eqref{defn:l_k} 
is satisfied and in addition
\begin{equation}\label{defn:choice}
C' \, \sum_{k=1}^\infty  \frac{c_{m_k} M_2^2  \left(  F_k, 1 \right) }{\ell_k} \le C^2
\end{equation}
where $C > 0$ is the given  constant in the proposition. 
By summing \eqref{estimateQn} over $k$ and taking \eqref{defn:choice} into account we arrive at the desired growth estimate
\begin{equation*}
M_2^2 \left( h, r \right) = \sum_{k=1}^\infty \sum_{n \in \mathcal{A}_k} M_2^2 \left(Q_n, r \right)
\le C^2  \frac{e^{2r}}{r^{N-3/2}}.
\end{equation*} 
\end{proof}

Combining  Propositions \ref{prop:FHCh} and \ref{prop:growthRate} we have established Theorem \ref{thm:growthRate}.

\subsection*{Concluding remarks} 

A natural further question is, what are the precise growth rates 
of the $L^p$-norm on $S(r)$ for $\partial/\partial x_k$-frequently hypercyclic 
harmonic functions $h \in \harmsp$ for $p \neq 2$?  A comparison with the  entire functions case~\cite{DS12} indicates that additional tools will be required. Note that
\cite{AA98b} and \cite{BBGE10} also contain results related to  
general partial differentiation operators $D^\alpha$ on  $\harmsp$ for 
$\alpha = (\alpha_1,\ldots,\alpha_N)$.

This paper forms part of the PhD thesis of Clifford Gilmore under the advice of Hans-Olav Tylli.

\bibliographystyle{abbrv}

\end{document}